\newtheorem{thm}{Theorem}
\newtheorem{remark}{Remark}
\newtheorem{definition}{Definition}
\newtheorem{proposition}{Proposition}
\newtheorem{assumption}{Assumption}
\definecolor{MariaBlue}{RGB}{74, 144, 226}
\definecolor{MariaGreen}{RGB}{80, 227, 194}
\title{\LARGE \bf
Optimal Control of Reduced Left-Invariant Hybrid Control Systems*
}
\author{William Clark$^{1}$ and Maria Oprea$^{2}$
\thanks{*This work was funded by AFOSR grant FA9550-23-1-0400}
\thanks{$^{1}$William Clark is with the Department of Mathematics, Ohio University, Athens, OH 45701, USA
        {\tt\small clarkw3@ohio.edu}}%
\thanks{$^{2}$Maria Oprea is with the Center for Applied Mathematics, Cornell University, Ithaca, NY 14853, USA
        {\tt\small mao237@cornell.edu}}%
}
\begin{document}

\maketitle
\thispagestyle{empty}
\pagestyle{empty}

\begin{abstract}
Optimal control is ubiquitous in many fields of engineering. A common technique to find candidate solutions is via Pontryagin's maximum principle. An unfortunate aspect of this method is that the dimension of system doubles. When the system evolves on a Lie group and the system is invariant under left (or right) translations, Lie-Poisson reduction can be applied to eliminate half of the dimensions (and returning the dimension of the problem to the back to the original number). 

Hybrid control systems are an extension of (continuous) control systems by allowing for sudden changes to the state. Examples of such systems include the bouncing ball - the velocity instantaneously jumps during a bounce, the thermostat - controls switch to on or off, and a sailboat undergoing tacking. The goal of this work is to extend the idea of Lie-Poisson reduction to the optimal control of these systems. If $n$ is the dimension of the original system, $2n$ is the dimension of the system produced by the maximum principle. In the case of classical Lie-Poisson reduction, the dimension drops back down to $n$. This, unfortunately, is impossible in hybrid systems as there must be an auxiliary variable encoding whether or not an event occurs. As such, the analogous hybrid Lie-Poisson reduction results in a $n+1$ dimensional system. The purpose of this work is to develop and present this technique.
\end{abstract}

\section{INTRODUCTION}
Many systems naturally evolve on Lie groups, e.g. a ship can be described by a point in $\mathrm{SE}_2$, a satellite by $\mathrm{SO}_3$, the double pendulum on $\mathbb{T}^2$, etc. As the underlying state-space is endowed with a group structure, the control law can exploit these symmetries by being left-invariant. If $G$ is the Lie group and $\mathcal{U}$ is the control set, then a left-invariant controlled vector field is a map $f:G\times\mathcal{U}\to TG$ such that
\begin{equation}\label{eq:intro_left_inv}
    \dot{g} = f(g,u) = g\cdot\tilde{f}(u), \quad \tilde{f}:\mathcal{U}\to\mathfrak{g},
\end{equation}
where $\mathfrak{g} = \mathrm{Lie}(G)$ is the corresponding Lie algebra.

A common question to attach to the control system \eqref{eq:intro_left_inv} is that of optimal control, i.e. find a control $u:[t_0,t_f]\to\mathcal{U}$ that minimizes some cost,
\begin{equation*}
    J\left(u(\cdot)\right) = \int_{t_0}^{t_f} \, L\left(g(s),u(s)\right) \, ds.
\end{equation*}
When the cost is also left-invariant, the resulting optimal control problem is left-invariant and has been extensively studied, e.g. \cite{brockett, JURDJEVIC1972313, Biggs2017, COLOMBO2023131, Sachkov_2022}.

A common paradigm for solving optimal control problems is via Pontraygin's maximum principle. This technique transforms the $n=\mathrm{dim}(G)$-dimensional control system into a split boundary value problem of dimension $2n$. When the underlying control system is left-invariant, Lie-Poisson reduction can be applied to reduce the dimension back to $n$. To aim of this work is to extend this reduction technique to hybrid control systems as shown in Fig. \ref{fig:intro_figure}, i.e. replacing the dynamics \eqref{eq:intro_left_inv} by
\begin{equation*}
    \begin{cases}
        \dot{g} = f(g,u), & g\not\in \Sigma, \\
        g^+ = \Delta(g), & g\in \Sigma.
    \end{cases}
\end{equation*}
In order to apply (hybrid) Lie-Poisson reduction to this problem, in addition to $f$ being left-invariant, both $\Sigma$ and $\Delta$ need to satisfy analogous properties. As will be shown, the presence of state jumps limits the strength of the reduction and only permits the dimension to be reduced to $n+1$.

\begin{figure}
    \centering
    \input{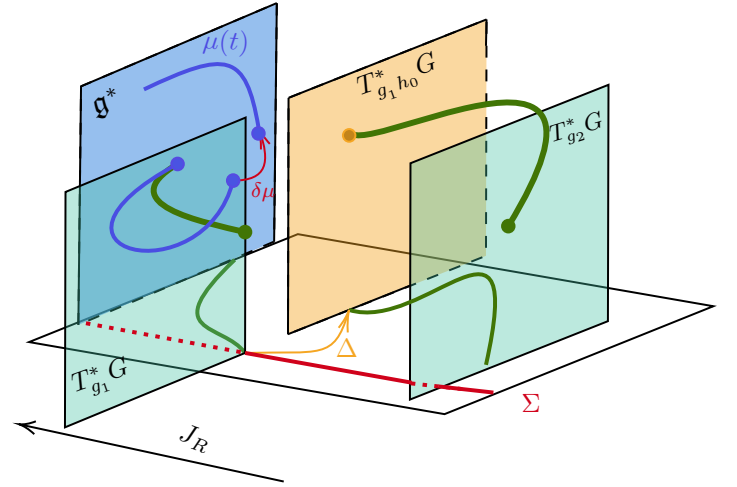}
    \caption{A schematic of Lie-Poisson reduction as applied to hybrid systems. An event occurs at $g_1\in\Sigma$ and the arc gets mapped to $\Delta(g_1)$. The map $J_R:T^*G\to\mathfrak{g}^*$ allows for the dimension reduction, and the corresponding reduced momentum jumps by $\delta \mu$}
    \label{fig:intro_figure}
\end{figure}
The structure of this paper is the following: Section \ref{sec:LIHS} defines the notion of a left-invariant hybrid control system on a Lie group. Section \ref{sec:LIOC} presents an overview of the maximum principle as it applied to (purely continuous) left-invariant optimal control problems and its relation wot Lie-Poisson reduction. Section \ref{sec:LIHOC} extends the techniques of left-invariant optimal control to hybrid control systems and contains the main results of this work: Theorem \ref{thm:main_result} and Algorithm \ref{alg:hybrid_LPO}. Section \ref{sec:SE2} applies the theory to $\mathrm{SE}_2$, the special Euclidean group of the plane. This work concludes with section \ref{sec:conclusion}.
\section{LEFT-INVARIANT HYBRID SYSTEMS}\label{sec:LIHS}
When the state-space for a control system is a Lie group, it is natural to examine problems that are left-invariant to fully exploit the group structure present. A substantial amount can be understood of these systems such as controllability and optimization \cite{left_inv_control}. Left-invariant control systems on a matrix group has dynamics of the following form
\begin{equation*}
    \dot{g} = g\cdot f(u),
\end{equation*}
where $f$ is the manifestation of the controls. To extend this idea to hybrid systems a guard and reset need to be introduced, along with the correct notion of ``left-invariant.'' 

Before specializing to the group case, we first present the general notion of a controlled hybrid system. A controlled hybrid dynamical system has the form
$    \mathcal{HC} = \left(M, \Sigma, \mathcal{U}, f, \Delta\right),
$ where
\begin{enumerate}
    \item $M$ is a finite-dimensional manifold, called the \textit{state-space},
    \item $\Sigma\subset M$ is a co-dimension 1 embedded submanifold, called the \textit{guard},
    \item $\mathcal{U}\subset\mathbb{R}^m$ is a closed set of \textit{admissible controls},
    \item $f:\mathcal{U}\times M\to TM$ is the \textit{controlled vector field}, and
    \item $\Delta:\Sigma\to M$ is a smooth map, called the \textit{reset}.
\end{enumerate}
In the case where the state-space is a Lie group, additional structure may be placed on the control system. Let $G$ be a finite-dimensional Lie group with Lie algebra $\mathfrak{g} = T_eG$. For a given element, $g\in G$, let $\ell_g:G\to G$ be left-translation and $r_g:G\to G$ be right-translation
\begin{equation*}
    \ell_g(h) = gh, \quad r_g(h) = hg,
\end{equation*}
which are diffeomorphisms. Denote its derivative by $(\ell_g)_*:TG\to TG$, and the pullback $r_g^*:T^*G \to T^*G $ by $J_R$ (see Fig. \ref{fig:intro_figure}). In the case where the group is a matrix group, $(\ell_g)_*$ is simply matrix multiplication. At first glance, a hybrid control system on a Lie group is left-invariant if it is invariant under the map $\ell_g$. A key piece of this invariance is that the guard must also be left-invariant. However, this cannot be true unless $\Sigma = G$, the whole group. Rather, the correct notion is that of being tangent preserving.
\begin{definition}[Tangent Preserving \cite{oprea2023study}]
    Let $\Sigma\subset G$ be a submanifold (not necessarily a subgroup). $\Sigma$ is (left) tangent preserving if for any $\sigma\in\Sigma$ and $g\in G$ such that $g\sigma\in\Sigma$, we have $(\ell_g)_*T_\sigma\Sigma = T_{g\sigma}\Sigma.$
\end{definition}

\begin{definition}[Left-Invariant Hybrid Control System]
    A left-invariant hybrid control system (LIHCS) is a refinement of a controlled hybrid dynamical system with $\mathcal{LHC} = (G,\Sigma,\mathcal{U}, f, \Delta)$ where
    \begin{enumerate}
        \item $G$ is a finite-dimensional Lie group,
        \item $\Sigma\subset G$ is tangent-preserving,
        \item $f:\mathcal{U}\to \mathfrak{g}$,
        \item $\Delta:\Sigma\to G$ satisfies
        \begin{equation}\label{eq:equivariant}
            \Delta(g\alpha) = g\Delta(\alpha), \quad \alpha,g\alpha\in \Sigma.
        \end{equation}
    \end{enumerate}
\end{definition}

The required ``equivariance'' property of $\Delta$ does not hold for every $h\in G$; it only holds for elements in the set
\begin{equation*}
    G_\alpha(\Sigma) := \left\{ g\in G :g\alpha\in\Sigma\right\}.
\end{equation*}
Likewise to be tangent preserving, one only needs to check against elements in this set. To elucidate the properties of these two objects, we have the following proposition.
\begin{proposition}\label{prop:coset}
    Let $K\leq G$ be a (closed) co-dimension 1 subgroup. For arbitrary $g_0,h_0\in G$, the right coset $\Sigma:= Kg_0$ is tangent preserving and the map
    \begin{equation*}
        \Delta(\alpha) = r_{h_0}(\alpha),
    \end{equation*}
    satisfies $\Delta(g\alpha) = g\Delta(\alpha)$ for all $g\in G_\alpha(\Sigma)$.
\end{proposition}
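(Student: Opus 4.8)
The plan is to treat the two assertions separately, beginning with the equivariance of $\Delta$, which is essentially immediate, and then the tangent-preserving property, which needs a short computation with translations. For the equivariance, I would simply unfold the definition of the reset: since $\Delta(\alpha) = r_{h_0}(\alpha) = \alpha h_0$, associativity of the group product gives $\Delta(g\alpha) = (g\alpha)h_0 = g(\alpha h_0) = g\Delta(\alpha)$ for every $g\in G$. The only role played by the restriction $g\in G_\alpha(\Sigma)$ is to guarantee that $g\alpha\in\Sigma$, so that both sides lie in the domain and codomain demanded by $\Delta:\Sigma\to G$; the algebraic identity itself holds universally.

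For the tangent-preserving property, the first and conceptually most important step is to pin down which $g$ actually satisfy the hypothesis $g\sigma\in\Sigma$. Writing $\sigma = kg_0$ with $k\in K$, the membership $g\sigma = gkg_0\in Kg_0$ is equivalent to $gk\in K$, i.e. to $g\in K$. Thus the set of admissible translations is exactly $K$, and the tangent-preserving condition only needs to be verified for $g\in K$. Next I would compute the tangent space of the guard at an arbitrary point. Writing $\Sigma = Kg_0 = r_{g_0}(K)$ exhibits the coset as the image of the subgroup under a diffeomorphism, so $T_{kg_0}\Sigma = (r_{g_0})_*\, T_kK$. Using that $K$ is a subgroup, so that $\ell_k(K)=K$ and hence $T_kK = (\ell_k)_*\,\mathfrak{k}$ where $\mathfrak{k}:=T_eK$, this becomes $T_{kg_0}\Sigma = (r_{g_0})_*(\ell_k)_*\,\mathfrak{k}$.

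The final step applies $(\ell_g)_*$ for $g\in K$ and checks that the result matches $T_{g\sigma}\Sigma$. The two facts that make this work are that left and right translations commute, hence so do their derivatives, and that $K$ is closed under products. Concretely, $(\ell_g)_*\,T_{kg_0}\Sigma = (\ell_g)_*(r_{g_0})_*(\ell_k)_*\,\mathfrak{k} = (r_{g_0})_*(\ell_{gk})_*\,\mathfrak{k}$, and since $gk\in K$ this equals $(r_{g_0})_*\,T_{gk}K = T_{(gk)g_0}\Sigma = T_{g\sigma}\Sigma$, as required. I expect the only subtle points to be bookkeeping ones: correctly using $\Sigma = r_{g_0}(K)$ rather than a left coset, so that it is right translation by $g_0$ that carries $K$ diffeomorphically onto $\Sigma$, and invoking the subgroup identity $T_kK = (\ell_k)_*\,\mathfrak{k}$. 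There is no genuine analytic obstacle here; the entire content is the observation that the guard condition forces $g\in K$, combined with the commutativity of the two translation actions and the closure of $K$.
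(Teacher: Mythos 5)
Your proof is correct, and its skeleton matches the paper's: identify the admissible translations as $G_\alpha(\Sigma)=K$, then use left translation by elements of $K$ to carry tangent spaces of $\Sigma$ onto one another; the equivariance of $\Delta=r_{h_0}$ is the same one-line associativity computation in both. The difference is one of self-containedness rather than strategy. The paper outsources the two key facts --- that $G_\alpha(\Sigma)=K$ and that $\ell_g:\Sigma\to\Sigma$ is a diffeomorphism for $g\in K$ --- to the reference \cite{oprea2023study}, and then obtains tangent preservation in one step by differentiating that diffeomorphism. You instead prove both facts directly: the coset computation $gkg_0\in Kg_0 \iff gk\in K \iff g\in K$, and the explicit decomposition $T_{kg_0}\Sigma=(r_{g_0})_*(\ell_k)_*\,T_eK$ combined with the commutativity $(\ell_g)_*(r_{g_0})_* = (r_{g_0})_*(\ell_g)_*$ and closure of $K$ under products. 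Your chain $(\ell_g)_*(r_{g_0})_*(\ell_k)_* = (r_{g_0})_*(\ell_{gk})_*$ is precisely what ``differentiating the diffeomorphism $\ell_g:\Sigma\to\Sigma$'' amounts to when written out, so logically nothing new is added, but your version stands on its own without the external citation --- a reasonable trade of brevity for completeness, and it makes visible where closedness of $K$ (hence embeddedness of the coset) and the subgroup property actually enter.
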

\begin{proof}
    Let $\alpha\in\Sigma = Kg_0$ and $g\in G_\alpha(\Sigma)$. As shown in \cite{oprea2023study}, $G_\alpha(\Sigma)=K$ and $\ell_g:\Sigma\to\Sigma$ is a diffeomorphism. Differentiating this shows the tangent preserving property.

    For the reset map, notice that
    \begin{equation*}
        r_{h_0}(g\alpha) = g\alpha h_0 = gr_{h_0}(\alpha).
    \end{equation*}
    Hence, $\Sigma=Kg_0$ is tangent preserving and $\Delta = r_{h_0}$ satisfies \eqref{eq:equivariant}.
\end{proof}
Cosets form a partition of the group via the equivalence relation
\begin{equation*}
    x\sim y \iff xy^{-1}\in K.
\end{equation*}
Equivalence classes of this relation will be denoted by
\begin{equation*}
    [x] = \left\{ y\in G : y\sim x\right\}.
\end{equation*}
In particular, if $\pi:G\to K\backslash G$ is the natural projection,
\begin{equation*}
    [x] = \left\{ y\in G : \pi(y) = \pi(x)\right\}.
\end{equation*}

Throughout the remainder of this work, the guard and reset maps of left-invariant hybrid control systems will have the form specified in Proposition \ref{prop:coset}.
However, there is a slightly more distinguished case that will be important to the task of reduction.
\begin{definition}[Normal LIHCS]
    A left-invariant hybrid control system is normal (NLIHCS) if $\Sigma = Kg_0$ for some (closed, co-dimension 1) normal subgroup $K\trianglelefteq G$.
\end{definition}

The dynamics of a left-invariant hybrid control system can be described via
\begin{equation}\label{eq:hybrid_LICS}
    \mathcal{LHC}:\begin{cases}
        \dot{g} = \left(\ell_g\right)_*f(u), & g\not\in Kg_0, \\
        g^+ = r_{h_0}(g^-), & g\in Kg_0.
    \end{cases}
\end{equation}
In order to guarantee that reset events are isolated (and thus Zeno is prohibited), we wish to have $\overline{\Delta(\Sigma)}\cap\Sigma = \emptyset$. This happens as long as $Kg_0 \ne Kg_0h_0$ (recall that cosets partition the group). This is enforced via the following assumption.
\begin{assumption}\label{ass:non_trivial_jump}
    We will henceforth assume that $g_0h_0g_0^{-1}\not\in K$. When $K$ is normal, this is equivalent to $h_0\not\in K$. This guarantees that $Kg_0\ne Kg_0h_0$.
\end{assumption}

\begin{remark}
    In the case where $G$ is a matrix group and the controls are affine, the controlled dynamics become
    \begin{equation*}
        \dot{g} = g\left( e^0 + u_1e^1 + \ldots + u_k e^k\right),
    \end{equation*}
    where $\{e^j\}_{j=0}^k$ is a collection of elements in $\mathfrak{g}$. In this case, an affine left-invarint hybrid control system has dynamics
    \begin{equation*}
        \begin{cases}
            \dot{g} = g\left( e^0 + u_1e^1 + \ldots u_ke^k\right), & gg_0^{-1}\not\in K, \\
            g^+ = g^-h_0, & gg_0^{-1}\in K.
        \end{cases}
    \end{equation*}
\end{remark}
Fundamental control questions such as controllability, accessibility, observability, etc., of the control system \eqref{eq:hybrid_LICS} are not explored here and are the subject of future work.
\subsection{Example: Special Euclidean Group}
As a simple demonstration of a normal left-invariant hybrid control system, let $G = \mathrm{SE}_2$ be the special Euclidean group on the plane. An element of this group can be described by
\begin{equation*}
    \mathrm{SE}_2 = \left\{ \begin{pmatrix}
        \cos\theta & \sin\theta & x \\
        -\sin\theta & \cos\theta & y \\
        0 & 0 & 1
    \end{pmatrix} \right\}.
\end{equation*}
The collection of translations form a normal subgroup and a right coset has the form
\begin{equation*}
    \Sigma = Kg_0 = \left\{ \left.\begin{pmatrix}
        \cos\theta^* & \sin\theta^* & x \\
        -\sin\theta^* & \cos\theta^* & y \\
        0 & 0 & 1
    \end{pmatrix}\right|\ x,y\in\mathbb{R}\right\},
\end{equation*}
where $\theta^*\in\mathbb{S}^1$ is some critical angle. Assuming fully actuated control, the resulting normal left-invariant hybrid control system has the continuous dynamics
\begin{equation}\label{eq:continuous_se2}
    \begin{split}
        \dot{x} &= u\cos\theta + v\sin\theta, \\
        \dot{y} &= v\cos\theta - u\sin\theta, \\
        \dot{\theta} &= \omega.
    \end{split}
\end{equation}
A reset occurs when $\theta = \theta^*$ and jumps according to
\begin{equation}\label{eq:impact_se2}
    \begin{split}
        x &\mapsto x + \tilde{x}\cos\theta + \tilde{y}\sin\theta, \\
        y &\mapsto y + \tilde{y}\cos\theta - \tilde{x}\sin\theta, \\
        \theta &\mapsto \theta + \tilde{\theta},
    \end{split}
\end{equation}
where $\tilde{x}$, $\tilde{y}$, and $\tilde{\theta}$ are arbitrary. Assumption \ref{ass:non_trivial_jump} states that $\tilde{\theta}\ne 0, \pm 2\pi, \pm4\pi, \ldots$
\section{LEFT-INVARIANT OPTIMAL CONTROL}\label{sec:LIOC}
A commonly used necessary condition in optimal control is Pontryagin's maximum principle. This method lifts the control problem to the co-tangent bundle of the state-space and the flow of a distinguished Hamilton produces the optimal trajectories. In the case of a left-invariant control system with left-invariant cost, the constructed Hamiltonian is also left-invariant. This allows for Lie-Poisson reduction to reduce the dimension of the problem from $2n$ down to $n$. This section presents an overview of the maximum principle, Lie-Poisson reduction and the application of the latter to the former in the context of purely continuous control systems.
\subsection{Pontraygin Maximum Principle}
We begin by demonstrating necessary conditions for the purely continuous optimal control problem of the form
\begin{equation}\label{eq:cost}
    \mathcal{J}\left(x_0\right) = \min_{u(\cdot)}\,\int_{t_0}^{t_f} \, L(x(t),u(t)) \, dt + \varphi(x(t_f)),
\end{equation}
subject to the dynamics
\begin{equation}\label{eq:cont_control}
    \dot{x} = f(x,u).
\end{equation}

Pontraygin's maximum principle \cite{liberzon_oc} states that regular optimal trajectories are intimately related to the flow from the Hamiltonian
\begin{gather*}
    H:T^*M\to \mathbb{R} \\
    H(x,p) = \min_u \, \left[\langle p, f(x,u)\rangle + L(x,u)\right].
\end{gather*}
That is, if $x(t)$ is an optimal trajectory then there exists $p(t)$ such that the pair satisfy Hamilton's equations
\begin{equation}\label{eq:cont_hamilton}
    \dot{x} = \frac{\partial H}{\partial p}, \quad \dot{p} = -\frac{\partial H}{\partial x}.
\end{equation}
Moreover, this problem is a mixed boundary value problem with conditions
\begin{equation*}
    x(t_0) = x_0, \quad p(t_f) = d\varphi_{x(t_f)}.
\end{equation*}
\subsection{Lie-Poisson Reduction}
Let $H:T^*G\to\mathbb{R}$ be a left-invariant Hamiltonian, i.e.
\begin{equation*}
    H\left(g, p\right) = H\left( e, \left(\ell_g\right)^*p\right),
\end{equation*}
for all $g\in G$. Its restriction to the identity will be denoted by
\begin{gather*}
    \mathfrak{h}:\mathfrak{g}^*\to\mathbb{R}, \quad
    \mathfrak{h}(\mu) = H(e, \mu).
\end{gather*}
\begin{thm}[Lie-Poisson Reduction \cite{mech_symmetry}]
    Let $H:T^*G\to\mathbb{R}$ be a left-invariant Hamiltonian with restriction $\mathfrak{h}$. If $(g(t),p(t))$ is a solution to Hamilton's equations:
    \begin{equation}\label{eq:ham_equations}
        \dot{g} = \frac{\partial H}{\partial p}, \quad \dot{p} = -\frac{\partial H}{\partial g},
    \end{equation}
    then $\mu := (\ell_g)^*p$ satisfies the following
    \begin{equation}\label{eq:LP_equations}
            \dot{\mu} = \mathrm{ad}^*_{d\mathfrak{h}}\mu, \quad 
            \dot{g} = \left(\ell_g\right)_*d\mathfrak{h}.
    \end{equation}
\end{thm}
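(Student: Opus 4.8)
The plan is to reduce everything to a computation with the canonical Poisson bracket on $T^*G$, organized around the functions that are linear in the momentum and generated by left-invariant vector fields. For $\xi\in\mathfrak{g}$ let $X_\xi$ be the left-invariant vector field $X_\xi(g) = (\ell_g)_*\xi$, and define its momentum function $P_\xi:T^*G\to\mathbb{R}$ by $P_\xi(g,p) = \langle p, (\ell_g)_*\xi\rangle$. By the definition of $\mu = (\ell_g)^*p$ we have $P_\xi(g,p) = \langle\mu,\xi\rangle$, so the components of $\mu$ in any basis $\{e_i\}$ of $\mathfrak{g}$ are exactly the functions $\mu_i = P_{e_i}$. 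This reframes the target identity $\dot\mu = \mathrm{ad}^*_{d\mathfrak{h}}\mu$ as the family of scalar statements $\frac{d}{dt}\langle\mu,\xi\rangle = \langle\mathrm{ad}^*_{d\mathfrak{h}}\mu,\xi\rangle$ for every fixed $\xi$.

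The key lemma, and the step I expect to be the main obstacle, is the bracket relation $\{P_\xi, P_\eta\} = -\langle\mu,[\xi,\eta]\rangle$. I would obtain it from two standard ingredients: first, that left-invariant vector fields satisfy $[X_\xi, X_\eta] = X_{[\xi,\eta]}$, so that $\mathfrak{g}$ is realized inside the vector fields on $G$; and second, the general identity $\{P_X, P_Y\} = -P_{[X,Y]}$ relating the canonical bracket of momentum functions to the Jacobi-Lie bracket on the base. Combining these gives $\{P_\xi, P_\eta\} = -P_{[\xi,\eta]} = -\langle\mu,[\xi,\eta]\rangle$. This is the only place where the group and algebra structure genuinely enters, and it is where the sign conventions (left versus right trivialization, the convention $\dot F = \{F,H\}$ for evolution, and the definition of $\mathrm{ad}^*$) must all be pinned down in order to land on the sign appearing in the statement.

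With the lemma in hand the first equation follows from the derivation property of the Poisson bracket. Since $H(g,p) = \mathfrak{h}(\mu)$ factors through the reduction map $(g,p)\mapsto\mu$, writing $\mathfrak{h}$ as a function of the components $\mu_i = P_{e_i}$ and applying the chain rule yields $\{P_\xi, H\} = \langle d\mathfrak{h}, \{P_\xi,\mu\}\rangle$, where the $\mathfrak{g}^*$-valued bracket is determined by $\langle\{P_\xi,\mu\},\eta\rangle = \{P_\xi,P_\eta\} = -\langle\mu,[\xi,\eta]\rangle$. Using $\langle\mu,[d\mathfrak{h},\xi]\rangle = \langle\mathrm{ad}^*_{d\mathfrak{h}}\mu,\xi\rangle$, this collapses to $\{P_\xi, H\} = \langle\mathrm{ad}^*_{d\mathfrak{h}}\mu,\xi\rangle$. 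Because $\frac{d}{dt}\langle\mu,\xi\rangle = \{P_\xi, H\}$ along the Hamiltonian flow and $\xi$ is arbitrary, the Lie-Poisson equation $\dot\mu = \mathrm{ad}^*_{d\mathfrak{h}}\mu$ follows.

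The second equation is comparatively immediate and I would dispatch it by differentiating the left-invariance relation $H(g,p) = \mathfrak{h}\left((\ell_g)^*p\right)$ with respect to $p$. For a variation $\delta p\in T_g^*G$ one finds $\langle\partial H/\partial p,\delta p\rangle = \langle d\mathfrak{h}(\mu), (\ell_g)^*\delta p\rangle = \langle\delta p, (\ell_g)_* d\mathfrak{h}(\mu)\rangle$, so $\partial H/\partial p = (\ell_g)_* d\mathfrak{h}$, and substituting into Hamilton's equation $\dot g = \partial H/\partial p$ gives $\dot g = (\ell_g)_* d\mathfrak{h}$. An equivalent route for the whole argument would be to left-trivialize $T^*G\cong G\times\mathfrak{g}^*$, push the canonical symplectic form forward to the standard Lie-Poisson form, and read both equations off $\iota_{X_H}\omega = dH$; I prefer the momentum-function computation because it isolates the single nontrivial identity and avoids having to make the covector $\dot p$ intrinsic by hand.
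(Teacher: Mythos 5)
Your proof is correct, but there is nothing in the paper to compare it against: the paper states this theorem as known background, attributing it to \cite{mech_symmetry}, and gives no proof of its own (its only proofs are of Proposition \ref{prop:coset} and Theorem \ref{thm:main_result}). What you have written is a complete and accurate rendition of the standard momentum-function argument, which is essentially the proof found in the cited textbook. Your key lemma $\{P_\xi,P_\eta\}=-\langle\mu,[\xi,\eta]\rangle$, obtained by combining $[X_\xi,X_\eta]=X_{[\xi,\eta]}$ with $\{P_X,P_Y\}=-P_{[X,Y]}$, is exactly how the minus Lie--Poisson bracket on $\mathfrak{g}^*$ is usually derived, and your sign bookkeeping is internally consistent: with the conventions $\dot F=\{F,H\}$, the canonical bracket $\{F,G\}=\partial_qF\,\partial_pG-\partial_pF\,\partial_qG$, and $\mathrm{ad}^*_\xi$ taken as the plain dual of $\mathrm{ad}_\xi$ (i.e.\ $\langle\mathrm{ad}^*_\xi\mu,\eta\rangle=\langle\mu,[\xi,\eta]\rangle$, which is the convention the paper needs for \eqref{eq:LP_equations} and uses later in Section \ref{sec:SE2}), your chain of identities $\{P_\xi,H\}=-\langle\mu,[\xi,d\mathfrak{h}]\rangle=\langle\mu,[d\mathfrak{h},\xi]\rangle=\langle\mathrm{ad}^*_{d\mathfrak{h}}\mu,\xi\rangle$ lands precisely on the stated equation. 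The fiber-derivative computation giving $\dot g=(\ell_g)_*d\mathfrak{h}$ is likewise correct, since $H(g,p)=\mathfrak{h}\left((\ell_g)^*p\right)$ by left-invariance and the fiber is a vector space. The only things you import without verification are the two ``standard ingredients,'' which are indeed standard; if the proof were written out in full, those are the steps to spell out, since, as you correctly flag, all of the sign content of the theorem lives there (under the other common convention, with $\mathrm{ad}^*$ defined with an extra minus sign, the reduced equation would read $\dot\mu=-\mathrm{ad}^*_{d\mathfrak{h}}\mu$).
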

\subsection{Reduced Maximum Principle}
To obtain a left-invariant Hamiltonian, the control dynamics and Lagrangian must be left-invariant, i.e. (via a slight abuse of notation)
\begin{equation*}
    f(g,u) = \left(\ell_g\right)_*f(u), \quad L(g,u) = L(e,u).
\end{equation*}
In the case where the running cost, $L$, is independent of the group variable, we will denote it via
\begin{gather*}
    \mathfrak{L}:\mathcal{U}\to\mathbb{R}, \\
    \mathfrak{L}(u) = L(e,u).
\end{gather*}
It is straightforward to see that the optimal Hamiltonian is left-invariant and the resulting (restricted) Hamiltonian is
\begin{gather*}
    \mathfrak{h}:\mathfrak{g}^*\to\mathbb{R} \\
    \mathfrak{h}(\mu) = \min_u \, \left[ \langle \mu, f(u)\rangle + \mathfrak{L}(u)\right].
\end{gather*}

The maximum principle requires solving the mixed boundary value problem
\begin{equation*}
    g(0) = g_0, \quad p(T) = d\varphi_{g(T)},
\end{equation*}
subject to the dynamics \eqref{eq:ham_equations}. This problem can be translated to \eqref{eq:LP_equations} via $\mu(T) = \left(\ell_{g(T)}\right)^*p(T)$. An advantage of this approach is that it is well-suited for forward-backward techniques \cite{lenhart2007optimal}; the initial value and terminal value problems are now decoupled and can be solved independently.
\section{LEFT-INVARIANT HYBRID OPTIMAL CONTROL}\label{sec:LIHOC}
\subsection{Hybrid Maximum Principle}
For the cost function \eqref{eq:cost}, consider the optimization problem where the dynamics are now governed by the hybrid dynamics
\begin{equation*}
    \mathcal{HC}:\begin{cases}
        \dot{x} = f(x,u), & x\not\in\mathcal{S}, \\
        x^+ = \Delta(x^-), & x\in\mathcal{S}.
    \end{cases}
\end{equation*}
Suppose that $x(t)$ is an optimal (hybrid) arc. As a coddling assumption, suppose that $x$ only undergoes resets a finite number of times (and hence, the impact times are all uniformly separated). Between resets, the classical maximum principle still applies, i.e. the trajectory obeys the dynamics \eqref{eq:cont_hamilton}. At the reset time, the co-states jump according to the ``Hamiltonian jump condition'' \cite{pakniyat_2023}:
\begin{equation}\label{eq:ham_jump}
    \begin{split}
        p^+\circ\Delta_* - p^- &\in\mathrm{Ann}(T\mathcal{S}), \\
        H^+ - H^- &= 0,
    \end{split}
\end{equation}
where $\mathrm{Ann}$ is the annihilator:
\begin{equation*}
    \mathrm{Ann}(T\mathcal{S}) = \left\{ (x,p)\in T^*M : x\in\mathcal{S}, \ \langle p, T\mathcal{S}\rangle = 0\right\}.
\end{equation*}
\begin{remark}
    The assumption that the number of resets is finite is crucial to the above conditions and there exist quite simple control systems that do not obey this assumption \cite{zeno_hOC}.
\end{remark}
\begin{remark}\label{rmk:jump_assumption}
    One hopes that there exists a unique solution for $p^+$ in \eqref{eq:ham_jump}. In the case of mechanical impact systems (where $H$ is a mechanical Hamiltonian and $\Delta$ is the identity map), there always exists a unique (non-trivial) solution. Unfortunately, there is no reason a priori to assume that this is the case for an arbitrary control system - especially if $\Delta_*$ does not have full rank. This is not always the case as in many cases (e.g. legged locomotion), the reset map is a projection. Invertability of $\Delta_*$ is not strictly required, but the subsequent analysis becomes substantially more involved \cite{submersive_reset}.
\end{remark}
\subsection{Impact Lie-Poisson Reduction}
One fundamental difficulty in performing Lie-Poisson reduction to hybrid systems is that the $\mu$- and $g$-dynamics cannot be fully decoupled as in the continuous case. The reason for this is that resets cause a jump in $\mu$ but are triggered by the state $g$. This makes the full reduction from $2n$ down to $n$ dimensions impossible. Fortunately, it is possible to reduce from $2n$ down to $n+1$ where the additional dimension records whether or not a reset occurs.

\begin{thm}[Impact Lie-Poisson Reduction \cite{oprea2023study}]\label{thm:LPO}
    Consider a left-invariant impact system: $G$ a Lie group, $\Sigma = Kg_0$ a right coset with $K\trianglelefteq G$ a normal, co-dimension 1 closed Lie subgroup. Denote the natural projection map by $\pi:G\to \Sigma$ and the Lie algebra of $K$ by $\mathfrak{K}$.
    
    Let $H:T^*G\to \mathbb{R}$ be a mechanical left-invariant Hamiltonian and let $\mathfrak{h}$ be its restriction to the identity. Suppose $(g(t),p(t))$ follows the hybrid flow $\varphi_t^\mathcal{H}$ and let $\mu(t) = \left(\ell_{g(t)}\right)^*p(t)$. Let $\sigma:K\backslash G\to G$ be a local section, $q\in K\backslash G$, and $\delta\mu\in\mathrm{Ann}(\mathfrak{K})$ be the unique non-trivial solution to $\mathfrak{h}(\mu) = \mathfrak{h}(\mu + \delta \mu)$. Then $(q(t),\mu(t)$ follows the dynamics
    \begin{equation}\label{eq:cont_LPO}
        \begin{cases}
            \dot\mu = \mathrm{ad}^*_{d\mathfrak{h}}\mu, \\
            \dot{q} = d\pi_{\sigma(q)}\left(\ell_{\sigma(q)}\right)_*d\mathfrak{h}(\mu),
        \end{cases} \quad q\not\in [g_0],
    \end{equation}
    \begin{equation}
        \begin{cases}
            \mu \mapsto \mu + \delta\mu, \\
            q \mapsto q,
        \end{cases} \quad q\in [g_0].
    \end{equation}
\end{thm}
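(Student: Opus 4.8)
The plan is to handle the continuous and reset regimes separately, since between impacts the trajectory is an ordinary Hamiltonian flow and only the jump requires genuinely new work. For the continuous regime I would invoke the classical Lie-Poisson Reduction Theorem directly: between resets $(g(t),p(t))$ solves Hamilton's equations \eqref{eq:ham_equations} for the left-invariant $H$, so $\mu = (\ell_g)^* p$ obeys $\dot\mu = \mathrm{ad}^*_{d\mathfrak{h}}\mu$ together with the reconstruction $\dot g = (\ell_g)_* d\mathfrak{h}(\mu)$. It then remains to push this forward along $q = \pi(g)$. Writing $\dot q = d\pi_g (\ell_g)_* d\mathfrak{h}(\mu)$, I would use normality of $K$: any representative of the coset $q$ has the form $g = k\,\sigma(q)$ with $k\in K$, and since left translation by $k\in K$ preserves right cosets we have $\pi\circ\ell_k = \pi$. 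Differentiating this identity at $\sigma(q)$ gives $d\pi_{g}\circ(\ell_k)_* = d\pi_{\sigma(q)}$, whence $d\pi_g(\ell_g)_* = d\pi_{\sigma(q)}(\ell_{\sigma(q)})_*$. This shows the reconstruction equation descends to $K\backslash G$ and produces exactly the stated $\dot q$, establishing \eqref{eq:cont_LPO}.

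For the reset regime the task is to rewrite the Hamiltonian jump condition \eqref{eq:ham_jump} in the reduced variables. A reset occurs precisely when $g^-\in\Sigma = Kg_0$, i.e. $q^- = [g_0]$, so the guard descends to the single point $[g_0]$, which explains the reset trigger. The energy equation $H^+ = H^-$ becomes $\mathfrak{h}(\mu^+) = \mathfrak{h}(\mu^-)$ immediately by left-invariance. For the covector equation $p^+\circ\Delta_* - p^- \in \mathrm{Ann}(T_{g^-}\Sigma)$ I would transport everything to the identity through $\ell_{g^-}$; the decisive geometric input is that the coset and tangent-preserving structure give $(\ell_{(g^-)^{-1}})_* T_{g^-}\Sigma = \mathrm{Ad}_{g_0^{-1}}\mathfrak{K} = \mathfrak{K}$, the last equality being exactly normality of $K$. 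The admissible momentum jump is therefore confined to the one-dimensional space $\mathrm{Ann}(\mathfrak{K})$, so writing $\mu^+ = \mu^- + \delta\mu$ forces $\delta\mu\in\mathrm{Ann}(\mathfrak{K})$, and combining with the energy equation yields $\mathfrak{h}(\mu^-) = \mathfrak{h}(\mu^-+\delta\mu)$. Existence and uniqueness of the nontrivial $\delta\mu$ then follow from $\mathfrak{h}$ being mechanical, hence quadratic in $\mu$: the energy equation collapses to a scalar quadratic in the one parameter scaling $\delta\mu$, whose nonzero root is unique, just as in an elastic impact (cf. Remark~\ref{rmk:jump_assumption}). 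The configuration component is finally read off from $q^+ = \pi(g^+) = \pi(g^- h_0)$.

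The hard part will be the covector equation in the reset step, because the reset $\Delta = r_{h_0}$ is a right translation while the reduction $\mu = (\ell_g)^* p$ is built from left translation, and these operations do not commute. Transporting $p^+$ back to the identity therefore produces an $\mathrm{Ad}^*_{h_0}$-type factor, and the delicate point is to control how this factor interacts with the annihilator. Here normality of $K$ does the essential bookkeeping: it makes $\mathrm{Ann}(\mathfrak{K})$ an $\mathrm{Ad}^*$-invariant line, and it is this invariance that keeps the reduced jump confined to a single direction and pinned down by the energy condition. Getting this interaction right — rather than the comparatively routine projection of the continuous flow or the reconstruction of $q$ — is where I expect the real care to be required.
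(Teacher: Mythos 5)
Your proposal conflates two different theorems. Theorem~\ref{thm:LPO} concerns a left-invariant \emph{impact} system: the reset map on the group is the identity (cf.\ Remark~\ref{rmk:jump_assumption}, ``$\Delta$ is the identity map''), which is exactly why its conclusion reads $q \mapsto q$ and $\mu \mapsto \mu + \delta\mu$ with no Adjoint factor; there is no $h_0$ anywhere in this statement. Your reset-step analysis, however, imports the right-translation reset $\Delta = r_{h_0}$ from the hybrid control setup of Section~\ref{sec:LIHS}: you conclude $q^+ = \pi(g^- h_0)$, and you declare the ``hard part'' to be controlling an $\mathrm{Ad}^*_{h_0}$ factor. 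Both claims contradict the theorem you are proving. Worse, the two halves of your reset analysis are mutually inconsistent: if the reset really were $r_{h_0}$, then transporting $p^+$ back to the identity gives $\mu^+ = \mathrm{Ad}^*_{h_0}\mu^- + \delta\mu$, not your $\mu^+ = \mu^- + \delta\mu$; and if the reset is the identity, then $q^+ = q^-$, not $q^-\cdot\pi(h_0)$. Whichever reset you intend, one of your two jump equations is false. (The $\mathrm{Ad}^*_{h_0}$ interaction you flag as the crux is precisely the content of Theorem~\ref{thm:main_result}, which the paper proves \emph{from} Theorem~\ref{thm:LPO}; Theorem~\ref{thm:LPO} itself is cited from \cite{oprea2023study} and not reproved in the paper.)

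The rest of your argument is sound and is the right skeleton for the impact case. The continuous part --- classical Lie--Poisson reduction plus descent of the reconstruction using $\pi\circ\ell_k = \pi$ (which, note, holds for any subgroup $K$, not only normal ones) --- is correct. The genuinely essential use of normality is where you placed it in the jump analysis: $\bigl(\ell_{(g^-)^{-1}}\bigr)_* T_{g^-}\Sigma = \mathrm{Ad}_{g_0^{-1}}\mathfrak{K} = \mathfrak{K}$, so the momentum jump is confined to $\mathrm{Ann}(\mathfrak{K})$; and the scalar-quadratic argument for the unique nontrivial $\delta\mu$ when $\mathfrak{h}$ is mechanical is fine. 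To repair the proof, simply set $\Delta_* = \mathrm{id}$ in the Hamiltonian jump condition \eqref{eq:ham_jump}: then $p^+ - p^- \in \mathrm{Ann}(T_{g^-}\Sigma)$ together with $g^+ = g^-$ immediately yields $\mu^+ - \mu^- \in \mathrm{Ann}(\mathfrak{K})$ and $q^+ = q^-$, with $\delta\mu$ pinned down by $\mathfrak{h}(\mu^+) = \mathfrak{h}(\mu^-)$. No Adjoint bookkeeping is needed anywhere in this theorem.
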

\begin{remark}
    The requirement that $\Sigma$ needs to be generated from a co-dimension 1 normal subgroup is quite restrictive. However, that condition is necessary for the reconstruction ($q$-dynamics) in \eqref{eq:cont_LPO} to be well-defined. The condition that $H$ be a mechanical Hamiltonian ensures that a unique (non-trivial) solution $\delta\mu$ for the co-state jumps exists. In more general contexts, the uniqueness/existence of solutions to $\mathfrak{h}(\mu)=\mathfrak{h}(\mu+\delta\mu)$ is not guaranteed.
\end{remark}
\subsection{Reduced Hybrid Maximum Principle}
Let $(G,\Sigma, \mathcal{U},f,\Delta)$ be a left-invariant hybrid control system with cost
\begin{equation}\label{eq:LI_cost}
    J = \int_{t_0}^{t_f} \, \mathfrak{L}(u) \, dt + \varphi\left( g(x_f)\right),
\end{equation}
where $\mathfrak{L}:\mathcal{U}\to\mathbb{R}$ is the running cost. The Hamiltonian arising from the maximum principle is left-invariant and has the form
\begin{equation}
    \mathfrak{h}(\mu) = \min_{u} \, \left[ \mathfrak{L}(u) + \langle \mu, f(u)\rangle\right].
\end{equation}
The resulting reduction results in the following dynamics.
\begin{thm}\label{thm:main_result}
    For a NLIHCS $(G, Kg_0, \mathcal{U}, f,r_{h_0})$ with invariant cost \eqref{eq:LI_cost}, regular optimal trajectories follow the continuous dynamics \eqref{eq:cont_LPO} and the modified jump conditions
    \begin{equation}\label{eq:opt_jump_LPO}
        \begin{cases}
            \mu \mapsto \mathrm{Ad}_{h_0}^*\mu + \delta\mu, \\
            q \mapsto q\cdot\pi\left(h_0\right),
        \end{cases} \quad q\in [g_0],
    \end{equation}
    where $\delta\mu\in\mathrm{Ann}(\mathfrak{K})$ such that $\mathfrak{h}(\mu^+) = \mathfrak{h}(\mu^-)$.
\end{thm}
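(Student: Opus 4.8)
The plan is to decompose an optimal hybrid arc into its continuous pieces and its isolated reset instants, applying the hybrid maximum principle on each and then transporting everything to $\mathfrak{g}^*$ through the reduction map $\mu = (\ell_g)^* p$. On a continuous piece the hybrid maximum principle asserts that $(g,p)$ satisfies the ordinary Hamilton equations \eqref{eq:ham_equations} for the left-invariant optimal Hamiltonian $\mathfrak{h}$. Invoking the Lie-Poisson Reduction theorem for $\mathfrak{h}$ gives the $\mu$-equation $\dot\mu = \mathrm{ad}^*_{d\mathfrak{h}}\mu$, and pushing the reconstruction equation $\dot g = (\ell_g)_* d\mathfrak{h}$ through $d\pi$ along a section $\sigma$ produces the $q$-equation, exactly as in the continuous half of Theorem \ref{thm:LPO}. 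Since none of this sees the value of $h_0$, the continuous dynamics \eqref{eq:cont_LPO} carry over verbatim and require no new argument; all the novelty sits in the reset.

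At a reset I would translate the two Hamiltonian jump conditions \eqref{eq:ham_jump} into reduced variables. Writing $\sigma = g^- \in Kg_0$ and $g^+ = \sigma h_0$, the co-state condition reads $(r_{h_0})^* p^+ - p^- \in \mathrm{Ann}(T_\sigma\Sigma)$, and I would pull it back by $(\ell_\sigma)^*$ to $\mathfrak{g}^*$. The key algebraic identity is $r_{h_0}\circ \ell_\sigma = \ell_{\sigma h_0}\circ c_{h_0^{-1}}$, from which $(\ell_\sigma)^*(r_{h_0})^* p^+ = (c_{h_0^{-1}})^*\mu^+ = \mathrm{Ad}_{h_0^{-1}}^*\mu^+$, while $(\ell_\sigma)^* p^- = \mu^-$. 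Together with the annihilator computation below this converts the condition into $\mathrm{Ad}_{h_0^{-1}}^*\mu^+ - \mu^- \in \mathrm{Ann}(\mathfrak{K})$; using $\mathrm{Ad}_{h_0^{-1}}^* = (\mathrm{Ad}_{h_0}^*)^{-1}$ and absorbing the $\mathrm{Ad}_{h_0}^*$-image of the annihilator term (which remains in $\mathrm{Ann}(\mathfrak{K})$ since $K$ is normal) into a relabeled $\delta\mu$ yields precisely $\mu^+ = \mathrm{Ad}_{h_0}^*\mu^- + \delta\mu$. The scalar condition $H^+ = H^-$ becomes $\mathfrak{h}(\mu^+) = \mathfrak{h}(\mu^-)$ because $H = \mathfrak{h}\circ(\ell_g)^*$ is left-invariant, and this equation is what singles out $\delta\mu \in \mathrm{Ann}(\mathfrak{K})$ (its solvability being exactly the caveat of Remark \ref{rmk:jump_assumption}). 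Finally, normality of $K$ makes $\pi:G\to K\backslash G$ a group homomorphism, so $q^+ = \pi(\sigma h_0) = \pi(\sigma)\pi(h_0) = q^-\cdot\pi(h_0)$, which is the claimed $q$-jump.

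The step I expect to be the crux is the identity $(\ell_\sigma)^*\mathrm{Ann}(T_\sigma\Sigma) = \mathrm{Ann}(\mathfrak{K})$, i.e. that left-translating the guard's conormal back to the identity returns the \emph{fixed} annihilator $\mathrm{Ann}(\mathfrak{K})$ independently of $\sigma$. I would establish it by computing $(\ell_{\sigma^{-1}})_* T_\sigma\Sigma$: writing $\sigma = k g_0$ and using $T_\sigma(Kg_0) = (r_{g_0})_*(\ell_k)_*\mathfrak{K}$, the commutation of left and right translations collapses this to $(r_{g_0})_*(\ell_{g_0^{-1}})_*\mathfrak{K} = \mathrm{Ad}_{g_0^{-1}}\mathfrak{K}$, which equals $\mathfrak{K}$ precisely because $K\trianglelefteq G$ forces $\mathfrak{K}$ to be $\mathrm{Ad}$-invariant. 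This is the single place where normality (beyond what Proposition \ref{prop:coset} already supplies) is indispensable, and it is also what guarantees that $\mathrm{Ad}_{h_0}^*$ preserves $\mathrm{Ann}(\mathfrak{K})$, legitimizing the relabeling of $\delta\mu$ above. The remaining care is bookkeeping of the $\mathrm{Ad}^*$ conventions and restricting attention to regular arcs so that the maximum principle's Hamiltonian is well defined.
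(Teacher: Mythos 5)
Your proposal is correct and follows essentially the same route as the paper's proof: apply the hybrid maximum principle, reduce the continuous arcs by Lie--Poisson reduction, and reduce the Hamiltonian jump condition \eqref{eq:ham_jump} using $\Delta_* = (r_{h_0})_*$, with normality of $K$ guaranteeing that $\mathrm{Ann}(\mathfrak{K})$ is preserved under $\mathrm{Ad}_{h_0}^*$. The only difference is level of detail: you spell out the conormal identity $(\ell_\sigma)^*\mathrm{Ann}(T_\sigma\Sigma)=\mathrm{Ann}(\mathfrak{K})$ and the $q$-jump via the homomorphism property of $\pi$, steps the paper leaves implicit by citing Theorem \ref{thm:LPO}.
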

\begin{proof}
    The continuous dynamics, \eqref{eq:cont_LPO}, follow from reducing Hamilton's flow from the maximum principle \eqref{eq:cont_hamilton}. The reset condition, \eqref{eq:opt_jump_LPO}, follows from reducing the Hamiltonian jump condition \eqref{eq:ham_jump} by Theorem \ref{thm:LPO} and noticing that
    \begin{equation*}
        \Delta_* = \left(r_{h_0}\right)_* \implies \left(\Delta_*\right)^{-1} = \left(r_{h_0^{-1}}\right)_*.
    \end{equation*}
    This results in the map
    \begin{equation*}
        p \mapsto \left(r_{{h_0}^{-1}}\right)^*p + \left(r_{{h_0}^{-1}}\right)^*\delta\mu.
    \end{equation*}
    Left-translating back to the identity yields
    \begin{equation*}
        \begin{split}
            \mu &\mapsto \left(\ell_{h_0}\right)^*\left(r_{h_0^{-1}}\right)^*\mu + \left(\ell_{h_0}\right)^*\left(r_{h_0^{-1}}\right)^*\delta\mu \\
            &\quad = \mathrm{Ad}_{h_0}^*\mu + \delta\mu,
        \end{split}
    \end{equation*}
    where $\mathrm{Ann}(\mathfrak{K})$ is closed under the co-Adjoint action as $K$ is a normal subgroup.
\end{proof}
\begin{remark}
    The jump condition \eqref{eq:opt_jump_LPO} tacitly makes the assumption that there exists a unique $\delta\mu$ such that 
    \begin{equation*}
        \mathfrak{h}\left( \mu\right) = \mathfrak{h}\left( \mathrm{Ad}_{h_0}^*\mu + \delta\mu \right).
    \end{equation*} 
    Unlike in the mechanical case, as discussed in Remark \ref{rmk:jump_assumption}, this is no longer guaranteed to be true. We will henceforth assume existence/uniqueness and relegate this question to future work.
\end{remark}
\begin{algorithm}
    \caption{Forward-Backward Solver for Hybrid Lie-Poisson Optimal Control}
    \begin{algorithmic}
        \State $g(T) \gets g_f^0$
        \State $\Delta g \gets \infty$
        \While {$\Delta g > tol$}
            \State $\mu_f \gets \left(\ell_{g(T)}\right)_*d\varphi_{g(T)}$
            \State $q_f \gets \pi(g(T))$
            \State \textbf{Solve} $\displaystyle \left[\begin{array}{c} \mu(t) \\ q(t) \end{array}\right]$ \textbf{backwards} : dynamics \eqref{eq:cont_LPO} and \eqref{eq:opt_jump_LPO}
            \State \textbf{Solve} $g(t)$ \textbf{forwards} : $\displaystyle \begin{cases}
                \dot{g} = \left(\ell_g\right)_*d\mathfrak{h}, & g\not\in\Sigma \\
                g^+ = \Delta(g^-), & g\in \Sigma
            \end{cases}$
            \State $g_f^{n+1} \gets g(T)$
            \State $\Delta g \gets \mathrm{dist}(g_f^{n}, g_f^{n+1})$
        
        \EndWhile
    \end{algorithmic}\label{alg:hybrid_LPO}
\end{algorithm}

\section{The Special Euclidean Group}\label{sec:SE2}
Consider the under-actuated control problem on $\mathrm{SE}_2$. The NLIHCS is a modified version of \eqref{eq:continuous_se2} and \eqref{eq:impact_se2} with continuous dynamics
\begin{equation}\label{eq:forward_se2_continuous}
    \begin{cases}
        \dot{x} = u\cos\theta, \\
        \dot{y} = -u\sin\theta, \\
        \dot{\theta} = \omega,
    \end{cases}\quad \theta \ne \dfrac{\pi}{2},
\end{equation}
and reset
\begin{equation}\label{eq:forward_se2_impact}
    \begin{cases}
        x \mapsto x, \\
        y \mapsto y - 1, \\
        \theta \mapsto \theta + \pi,
    \end{cases} \quad \theta = \dfrac{\pi}{2}.
\end{equation}
This reset map is equivalent to right-translation by the matrix
\begin{equation*}
    h_0 = \begin{pmatrix}
        -1 & 0 & 1 \\
        0 & -1 & 0 \\
        0 & 0 & 1
    \end{pmatrix}.
\end{equation*}
Let the cost be 
\begin{equation*}
    \begin{split}
        J &= \int_{t_0}^{t_f} \, \frac{1}{2}\left( u^2 + \omega^2 \right) \, dt + \varphi\left(x(t_f), y(t_f), \theta(t_f)\right).
    \end{split}
\end{equation*}
To reduce this system, we clarify that
\begin{gather*}
    f:\mathcal{U} = \mathbb{R}^2 \to \mathfrak{se}_2 \\
    f(v,\omega) = \begin{pmatrix}
        0 & \omega & u \\
        -\omega & 0 & 0 \\
        0 & 0 & 0
    \end{pmatrix},
\end{gather*}
and we denote the pairing $\mathfrak{se}_2^*\times\mathfrak{se}_2\to\mathbb{R}$ via
\begin{equation*}
    \left\langle \begin{pmatrix}
        \mu_x \\ \mu_y \\ \mu_\theta
    \end{pmatrix}, \begin{pmatrix}
        0 & \omega & u \\ -\omega & 0 & v \\ 0 & 0 & 0
    \end{pmatrix}\right\rangle = \mu_xu + \mu_yv + \mu_\theta\omega.
\end{equation*}
The (translated) Hamiltonian for this system is
\begin{equation*}
    \begin{split}
        \mathfrak{h}(\mu_x,\mu_y,\mu_\theta) &= \min_{u,\omega} \, \left[ \mu_xu + \mu_\theta\omega + \frac{1}{2}\left(u^2+\omega^2\right)\right] \\
        &= -\frac{1}{2}\left( \mu_x^2 + \mu_\theta^2\right),
    \end{split}
\end{equation*}
with controls $u = -\mu_x$ and $\omega = -\mu_\theta$. The Lie-Poisson equations on $\mathfrak{se}_2^*$ are
\begin{equation*}
    \begin{split}
        \dot{\mu}_x = -\mu_y\frac{\partial\mathfrak{h}}{\partial\mu_\theta}, \quad
        \dot{\mu}_y = \mu_x\frac{\partial\mathfrak{h}}{\partial \mu_\theta}, \quad
        \dot{\mu}_\theta = \mu_y\frac{\partial\mathfrak{h}}{\partial\mu_x} - \mu_x\frac{\partial\mathfrak{h}}{\partial \mu_y}
    \end{split}
\end{equation*}
which results in the continuous dynamics
\begin{equation}\label{eq:se2_continuousLP}
    \begin{array}{ll}
        \dot{\mu}_x = -\mu_y\mu_\theta, & \dot{\mu}_y = \mu_x\mu_\theta, \\
        \dot{\mu}_\theta = -\mu_x\mu_y, & \dot{\theta} = -\mu_\theta.
    \end{array}
\end{equation}
The last equation is the reduced $q$-dynamics on $K\backslash G\cong \mathbb{S}^1$. To compute the jump in the co-states, we notice that
\begin{equation*}
    \mathrm{Ad}_{h_0}^*\begin{pmatrix}
        \mu_x \\ \mu_y \\ \mu_\theta
    \end{pmatrix} = \begin{pmatrix}
        -\mu_x \\ -\mu_y \\ \mu_y - \mu_\theta
    \end{pmatrix}.
\end{equation*}
The modified jump condition has the form
\begin{equation*}
    \begin{pmatrix}
        \mu_x \\ \mu_y \\ \mu_\theta
    \end{pmatrix} \mapsto \begin{pmatrix}
        -\mu_x \\ -\mu_y \\ \mu_y-\mu_\theta
    \end{pmatrix} + \begin{pmatrix}
        0 \\ 0 \\ \varepsilon
    \end{pmatrix}.
\end{equation*}
Enforcing energy conservation actually yields two solutions:
\begin{equation}\label{eq:se2_resetLP}
    \begin{split}
        \mu_x^+ &= -\mu_x, \qquad \quad
        \mu_y^+ = -\mu_y, \\
        \mu_\theta^+ &= \pm \mu_\theta^-, \qquad\quad \theta^+ = \theta^-+\pi.
    \end{split}
\end{equation}
This system can be reduced further as there exists a Casimir
\begin{equation*}
    C = \mu_x^2 + \mu_y^2,
\end{equation*}
which is invariant under \underline{both} \eqref{eq:se2_continuousLP} and \eqref{eq:se2_resetLP}. Incorporating this conserved quantity allows the optimal control problem to be encoded in the following 3-dimensional hybrid system
\begin{equation}\label{eq:backwards_se2_continuous}
    \theta\ne\frac{\pi}{2}:\begin{cases}
        \dot{\alpha} = \mu_\theta, \\
        \dot{\mu}_\theta = -C^2\sin\alpha\cos\alpha, \\
        \dot{\theta} = -\mu_\theta
    \end{cases}
\end{equation}
\begin{equation}\label{eq:backwards_se2_impact}
    \theta = \frac{\pi}{2}: \begin{cases}
        \alpha^+ = \alpha^- + \pi, \\
        \mu_\theta^+ = \pm \mu_\theta^-, \\
        \theta^+ = \theta^- + \pi.
    \end{cases}
\end{equation}
The new variable, $\alpha$, comes from the Casimir via
\begin{equation*}
    \mu_x = C\cos\alpha,\quad \mu_y = C\sin\alpha.
\end{equation*}
Solving the optimal control problem via Algorithm \ref{alg:hybrid_LPO} involves solving \eqref{eq:forward_se2_continuous} and \eqref{eq:forward_se2_impact} forwards, and \eqref{eq:backwards_se2_continuous} and \eqref{eq:backwards_se2_impact} backwards.

Notice that $\alpha+\theta \equiv D$ is a hybrid constant of motion (modulo $2\pi$). The resulting $(\theta,\mu_\theta)$-dynamics are (impact) Hamiltonian with 
\begin{equation*}
    H(\theta,\mu_\theta) = -\frac{1}{2}\mu_\theta^2 + \frac{C^2}{4}\cos\left(2D-2\theta\right).
\end{equation*}
Applying these conserved quantities to the full system yields the continuous dynamics
\begin{equation}\label{eq:decon_recon_cont}
    \begin{split}
        \dot{x} &= -C\cos(D-\theta)\cos\theta, \\
        \dot{y} &= C\cos(D-\theta)\sin\theta, \\
        \dot{\theta} &= -\mu_\theta, \\
        \dot{\mu}_\theta &= -C^2\sin(D-\theta)\cos(D-\theta),
    \end{split}
\end{equation}
along with the reset equation (when $\theta = \pi/2$)
\begin{equation}\label{eq:decon_recon_reset}
    \begin{split}
        x \mapsto x, \qquad\qquad  &\theta \mapsto \theta + \pi, \\
        y \mapsto y - 1, \qquad \ &\mu_\theta \mapsto \pm\mu_\theta.\\
    \end{split}
\end{equation}
A sample trajectory of the dynamics \eqref{eq:decon_recon_cont} and \eqref{eq:decon_recon_reset}, along with the running cost, are shown in Fig. \ref{fig:state_trajectory} and \ref{fig:running_cost}.
\begin{figure}
    \centering
    \includegraphics[width=\columnwidth]{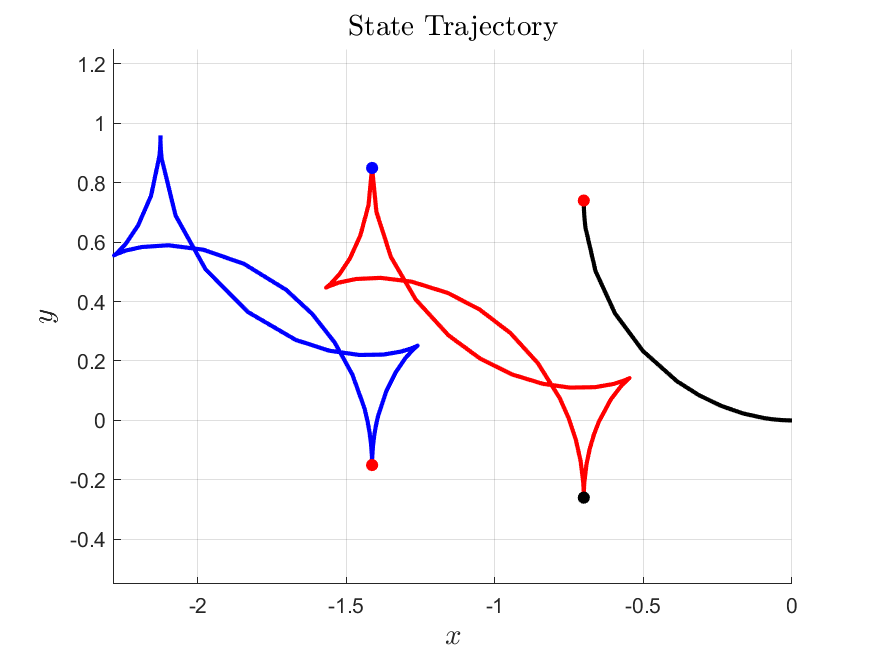}
    \caption{A trajectory with initial conditions $(x_0,y_0,\theta_0) = (0,0,0)$. The auxiliary parameters are $C=D=1$ and $\mu_\theta(0) = -1$. The black arc is the initial trajectory which maps to the red trajectories after the first reset. The two red arcs indicate following both lifted reset maps \eqref{eq:decon_recon_reset}. Notice that the next reset (red to blue) is independent on which reset map was chosen.}
    \label{fig:state_trajectory}
\end{figure}
\begin{figure}
    \centering
    \includegraphics[width=\columnwidth]{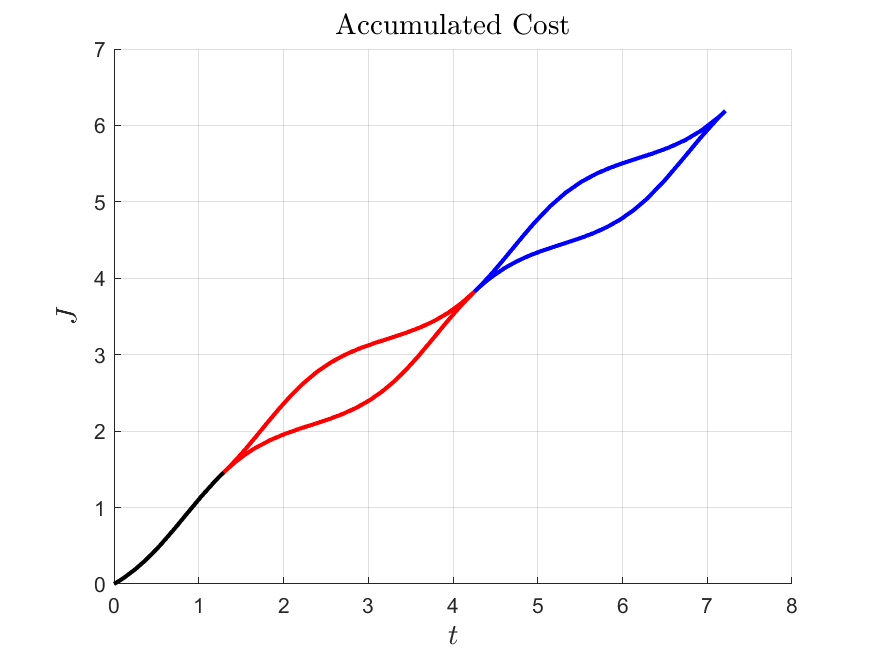}
    \caption{A plot of the running cost against time corresponding to the trajectories in Fig. \ref{fig:state_trajectory}. This cost is multi-valued as there are two choices for \eqref{eq:decon_recon_reset}. Notice that at resets, both options result in the same total cost.}
    \label{fig:running_cost}
\end{figure}
\section{CONCLUSION}\label{sec:conclusion}
This work presented the analogous version of Lie-Poisson reduction to left-invariant optimal control problems where the underlying dynamics are hybrid. Unlike the classical Lie-Poisson reduction where the resulting dynamics are $n$-dimensional, the hybrid version can only reduce the dimension to $n+1$. Algorithm \ref{alg:hybrid_LPO} was presented as an approach to determine the optimal trajectories of such systems. There are two main avenues for future work along this subject.

The performance of Algorithm \ref{alg:hybrid_LPO} needs studied. This algorithm is an iterative approach to determine the optimal hybrid trajectory. Although the optimal trajectory is a fixed-point of this iterative scheme, the stability and convergence properties are unknown. To be of use in practical applications, criteria need to be determined to ensure that the iterations converge exponentially to the true solution. Alternatively, if the iterations are unstable, there should be a way to modify the iterations to stabilize the scheme.

The other immediate focus in the future is the prohibitive restriction on the guard that it be a coset of a normal subgroup. For a given (arbitrary) Lie group, just possessing a co-dimension 1, closed, normal subgroup is by no means guaranteed; $\mathrm{SO}_3$ and $\mathrm{SE}_3$ do not have such a subgroup. If this subgroup does not exist, then the proposed reduction does not work. As the normality of $K$ is required to reduce the dimension to $n+1$, it would be interesting to determine whether or not relaxing this requirement would lead to a reduced system of dimension $n+k$ for some $1<k<n$.

\addtolength{\textheight}{-12cm}   






\bibliographystyle{IEEEtran}
\bibliography{ref.bib}

\begin{thebibliography}{10}
\providecommand{\url}[1]{#1}
\csname url@samestyle\endcsname
\providecommand{\newblock}{\relax}
\providecommand{\bibinfo}[2]{#2}
\providecommand{\BIBentrySTDinterwordspacing}{\spaceskip=0pt\relax}
\providecommand{\BIBentryALTinterwordstretchfactor}{4}
\providecommand{\BIBentryALTinterwordspacing}{\spaceskip=\fontdimen2\font plus
\BIBentryALTinterwordstretchfactor\fontdimen3\font minus
  \fontdimen4\font\relax}
\providecommand{\BIBforeignlanguage}[2]{{%
\expandafter\ifx\csname l@#1\endcsname\relax
\typeout{** WARNING: IEEEtran.bst: No hyphenation pattern has been}%
\typeout{** loaded for the language `#1'. Using the pattern for}%
\typeout{** the default language instead.}%
\else
\language=\csname l@#1\endcsname
\fi
#2}}
\providecommand{\BIBdecl}{\relax}
\BIBdecl

\bibitem{brockett}
R.~W. Brockett, ``System theory on group manifolds and coset spaces,''
  \emph{SIAM Journal on Control}, vol.~10, no.~2, pp. 265--284, 1972.

\bibitem{JURDJEVIC1972313}
V.~Jurdjevic and H.~J. Sussmann, ``Control systems on lie groups,''
  \emph{Journal of Differential Equations}, vol.~12, no.~2, pp. 313--329, 1972.

\bibitem{Biggs2017}
R.~Biggs and C.~C. Remsing, \emph{Invariant Control Systems on Lie
  Groups}.\hskip 1em plus 0.5em minus 0.4em\relax Cham: Springer International
  Publishing, 2017, pp. 127--181.

\bibitem{COLOMBO2023131}
L.~Colombo and E.~Stratoglou, ``Lie-{P}oisson reduction for optimal control of
  left-invariant control systems with subgroup symmetry,'' \emph{Reports on
  Mathematical Physics}, vol.~91, no.~1, pp. 131--141, 2023.

\bibitem{Sachkov_2022}
Y.~L. Sachkov, ``Left-invariant optimal control problems on lie groups:
  classification and problems integrable by elementary functions,''
  \emph{Russian Mathematical Surveys}, vol.~77, no.~1, p.~99, feb 2022.

\bibitem{left_inv_control}
Y.~Sachkov, ``Control theory on {L}ie groups,'' \emph{J Math Sci}, vol. 156,
  pp. 381--439, 2009.

\bibitem{oprea2023study}
M.~Oprea, A.~Shaw, R.~Huq, K.~Iwasaki, D.~Kassabova, and W.~Clark, ``A study of
  the long-term behavior of hybrid systems with symmetries via reduction and
  the {F}robenius-{P}erron operator,'' 2023.

\bibitem{liberzon_oc}
D.~Liberzon, \emph{Calculus of Variations and Optimal Control Theory}.\hskip
  1em plus 0.5em minus 0.4em\relax Princeton University Press, 2012.

\bibitem{mech_symmetry}
J.~Marsden and T.~Ratiu, \emph{Introduction to Mechanics and Symmetry}.\hskip
  1em plus 0.5em minus 0.4em\relax New York: Springer, 1999.

\bibitem{lenhart2007optimal}
S.~Lenhart and J.~Workman, \emph{Optimal Control Applied to Biological Models},
  ser. Chapman \& Hall/CRC Mathematical and Computational Biology.\hskip 1em
  plus 0.5em minus 0.4em\relax CRC Press, 2007.

\bibitem{pakniyat_2023}
A.~Pakniyat and P.~Caines, ``The minimum principle of hybrid optimal control
  theory,'' \emph{Math. Control Signals Syst.}, 2023.

\bibitem{zeno_hOC}
W.~Clark and M.~Oprea, ``Optimality of {Z}eno executions in hybrid systems,''
  in \emph{2023 American Control Conference (ACC)}, 2023, pp. 3983--3988.

\bibitem{submersive_reset}
W.~Clark, M.~Oprea, and A.~Shaw, ``Optimal control of hybrid systems with
  submersive resets,'' 2024.

\end{thebibliography}

\end{document}